\title[Corrigendum]
{Corrigendum:~On subadditivity of the logarithmic Kodaira dimension}
\author{Osamu Fujino} 
\date{2019/12/21, version 0.06}
\keywords{Nakayama's numerical Kodaira dimension} 
\subjclass[2010]{Primary 14R05; Secondary 14E30}
\address{Department of Mathematics, Graduate School of 
Science, Osaka University, Toyonaka, Osaka 560-0043, Japan} 
\email{fujino@math.sci.osaka-u.ac.jp} 
\newcommand{\rank}[0]{{\operatorname{rank}}}
\newcommand{\Supp}[0]{{\operatorname{Supp}}}
\newtheorem{thm}{Theorem}[section]
\newtheorem{lem}[thm]{Lemma}
\newtheorem{conj}[thm]{Conjecture}
\theoremstyle{definition}
\newtheorem{rem}[thm]{Remark}
\newtheorem*{ack}{Acknowledgments} 
\newtheorem{say}[thm]{}
\begin{document}

\maketitle 

\begin{abstract}
John Lesieutre constructed an example 
satisfying $\kappa_\sigma\ne \kappa_\nu$. 
This says that the proof of the inequalities 
in Theorems 1.3, 1.9, and Remark 3.8 
in [O.~Fujino, 
On subadditivity of the logarithmic Kodaira dimension, 
J. Math. Soc. Japan \textbf{69} (2017), no. 4, 1565--1581] 
is insufficient. 
We claim that some weaker inequalities still hold true and they are 
sufficient for various applications. 
\end{abstract}

\tableofcontents 

\section{Introduction} 

In \cite{lesieutre}, John Lesieutre constructs a smooth projective 
threefold $X$ and a pseudo-effective 
$\mathbb R$-divisor $D$ on $X$ such that 
$\kappa_\sigma(D)=1$ and $\kappa _\nu(D)=2$. 
This means that the equality $\kappa _\sigma=\kappa_\nu$ does not 
always hold true. In the proof of \cite[Theorem 1.3]{fujino}, 
we used the following lemma (see \cite[Lemma 2.8]{fujino}), which 
is a special 
case of \cite[Theorem 6.7 (7)]{lehmann}. 

\begin{lem}\label{f-lem1.1}
Let $D$ be a pseudo-effective 
Cartier divisor on a smooth 
projective variety $X$. 
We fix some sufficiently ample Cartier divisor 
$A$ on $X$. 
Then there exist positive constants $C_1$ and $C_2$ 
such that 
\begin{equation*}
C_1m^{\kappa_{\sigma}(X, D)}\leq 
\dim H^0(X, \mathcal O_X(mD+A))\leq 
C_2m^{\kappa _{\sigma}(X, D)}
\end{equation*} 
for every sufficiently large $m$. 
\end{lem}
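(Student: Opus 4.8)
The plan is to set $k=\kappa_\sigma(X,D)$ and to upgrade the two one-sided pieces of information contained in Nakayama's definition of $\sigma(D;A)=\kappa_\sigma(X,D)$ into the asserted two-sided estimate. Recall that, by definition, $k$ is the largest integer for which $\limsup_{m}\dim H^0(X,\mathcal O_X(mD+A))/m^{k}>0$; thus one gets for free (i) a constant $c>0$ and an infinite set of $m$ with $\dim H^0(X,\mathcal O_X(mD+A))\ge c\,m^{k}$, and (ii) the weak upper bound $\dim H^0(X,\mathcal O_X(mD+A))=o(m^{k+1})$. Neither is what we want: (i) controls only a subsequence, and (ii) excludes growth of order $k+1$ or higher but not of order, say, $k+\tfrac12$. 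The content of the lemma---equivalently of \cite[Theorem 6.7 (7)]{lehmann}---is the passage to a genuine polynomial order that is attained, with matching constants, for \emph{every} large $m$. I emphasise at the outset that this must be carried out entirely on the cohomological side: replacing the growth order of $h^0$ by an intersection-theoretic numerical dimension $\kappa_\nu$ is exactly the step that Lesieutre's example \cite{lesieutre} shows to be illegitimate.

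For the upper bound I would argue by induction on $n=\dim X$, cutting down by a general member $H\in|A|$ (take $A$ very ample, so that $H$ is smooth, $A|_H$ is again sufficiently ample, and $D|_H$ is pseudo-effective). From the restriction sequence $0\to\mathcal O_X(mD)\to\mathcal O_X(mD+A)\to\mathcal O_H((mD+A)|_H)\to0$ one obtains
\[
\dim H^0(X,\mathcal O_X(mD+A))\le \dim H^0(X,\mathcal O_X(mD))+\dim H^0(H,\mathcal O_H(m(D|_H)+A|_H)).
\]
The first term is $O(m^{\kappa(X,D)})$ by Iitaka's classical upper bound, and $\kappa(X,D)\le\kappa_\sigma(X,D)=k$ since $h^0(mD)\le h^0(mD+A)$ termwise. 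The second term is $O(m^{\kappa_\sigma(H,\,D|_H)})$ by the inductive hypothesis, and the fact that restriction to a general member of a very ample system does not increase $\kappa_\sigma$ (a standard property of Nakayama's $\sigma$-dimension) gives $\kappa_\sigma(H,D|_H)\le k$. Hence both terms are $O(m^{k})$, which is the desired upper bound; the base case $n\le 1$ is immediate from Riemann--Roch.

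The hard part is the lower bound, that is, promoting (i) from a subsequence to all large $m$. It is worth seeing why the naive devices fail. There is no effective section of $D$ with which to compare the levels $m$ and $m+1$, so $\dim H^0(X,\mathcal O_X(mD+A))$ need not be monotone in $m$; and the fixed twist by $A$ destroys any graded-algebra structure, since $H^0(mD+A)\cdot H^0(m'D+A)$ lands in $H^0((m+m')D+2A)$ rather than in $H^0((m+m')D+A)$, so neither a log-concavity argument nor the Okounkov-body machinery for graded linear series applies off the shelf. This is the genuine obstacle, and it is precisely what \cite[Theorem 6.7 (7)]{lehmann} supplies. My plan is therefore to invoke the Nakayama--Lehmann regularity theory: passing to the perturbations $D+\varepsilon A$, whose volume-type invariants vary continuously and govern, in the limit, the growth order of $h^0(mD+A)$, one shows that $\liminf_{m}\dim H^0(X,\mathcal O_X(mD+A))/m^{k}>0$, which together with the upper bound produces the constants $C_1,C_2$. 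I expect this last step to be where all the real work lies, and I would either reproduce Lehmann's argument or, for the purposes of this corrigendum, simply cite \cite[Theorem 6.7 (7)]{lehmann} once the reduction above has isolated exactly what is needed from it.
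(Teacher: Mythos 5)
There is a genuine gap, and it is precisely the gap that this corrigendum exists to document: the paper does not prove Lemma \ref{f-lem1.1} at all. It quotes it as the ingredient of \cite[Theorem 1.3]{fujino} whose only known proof, via \cite[Theorem 6.7 (7)]{lehmann}, relies on the false identity $\kappa_\sigma=\kappa_\nu$; the lower bound of the lemma is essentially restated as the open Conjecture \ref{f-conj1.4}, and the main theorems are weakened so as not to need it. Your proposal collapses at exactly this point. The ``Nakayama--Lehmann regularity theory'' you invoke for the lower bound --- perturbing to $D+\varepsilon A$ and letting ``volume-type invariants'' govern the growth of $h^0(mD+A)$ --- is the step where the intersection-theoretic dimension $\kappa_\nu$ silently replaces $\kappa_\sigma$, which is exactly the illegitimate move you correctly flag in your own first paragraph; and your fallback of ``simply citing \cite[Theorem 6.7 (7)]{lehmann}'' is circular, since the proof of that result is what is being retracted. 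Upgrading the $\limsup$ in the definition of $\kappa_\sigma$ to a $\liminf$ with the same exponent is not a technical refinement one can outsource: for $\mathbb R$-divisors Lesieutre's example \cite{lesieutre} shows the conclusion of \cite[Theorem 6.7 (7)]{lehmann} is actually false, and for Cartier divisors the lower bound remains open.

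The upper-bound induction is also flawed at a concrete step. You need $\kappa_\sigma(H,D|_H)\le\kappa_\sigma(X,D)$ for a general member $H\in|A|$, and this is false: take $X$ a smooth projective surface and $D=E$ a $(-1)$-curve, so that $\kappa_\sigma(X,E)=0$, while $E|_H$ has positive degree $E\cdot H$ on the general curve $H\in |A|$ and hence $\kappa_\sigma(H,E|_H)=1$. Your restriction sequence then only yields $\dim H^0(X,\mathcal O_X(mE+A))=O(m)$, not the required $O(1)$. (The bound $O(1)$ does hold in this example, but via the divisorial Zariski decomposition, not via restriction to $H$.) So neither half of the lemma is established by the proposal; the honest conclusion, which is the one the paper draws, is that the two-sided estimate remains conjectural and the downstream inequalities must be weakened accordingly.
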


Unfortunately, the proof of \cite[Theorem 6.7 (7)]{lehmann} 
in \cite{lehmann} (see also \cite{eckl}) depends on the wrong fact 
that $\kappa_\sigma=\kappa_\nu$ always holds. 
Moreover, Lesieutre's example says that 
\cite[Theorem 6.7 (7)]{lehmann} is not true 
when $D$ is an $\mathbb R$-divisor. 
Therefore, this trouble damages \cite[Theorems 1.3, 1.9, and Remark 
3.8]{fujino}. 
This means that the proof of 
the inequalities in \cite[Theorems 1.3 and 1.9]{fujino} 
and \cite[Chapter V, 4.1.~Theorem (1)]{nakayama} is incomplete. 

In this paper, we explain that slightly weaker inequalities 
than the original ones in \cite[Theorems 1.3 and 1.9]{fujino} 
and \cite[Chapter V, 4.1.~Theorem (1)]{nakayama} still 
hold true. Fortunately, these weaker inequalities 
are sufficient for \cite[Corollaries 1.5 and 1.6]{fujino} 
and some other applications. Note that one of the 
main purposes of \cite{fujino} is to reduce Iitaka's 
subadditivity conjecture on the 
logarithmic Kodaira dimension $\overline \kappa$ 
(see \cite[Conjecture 1.1]{fujino}) to 
a special case of the generalized abundance 
conjecture (see \cite[Conjecture 1.4]{fujino}). 
For that purpose, one of the weaker inequalities in 
Theorem \ref{q-thm2.1} below is sufficient. 

\begin{rem}\label{f-rem1.2}
Kenta Hashizume reduces Iitaka's subadditivity conjecture 
on the logarithmic Kodaira dimension $\overline \kappa$ 
to the generalized abundance conjecture for sufficiently general 
fibers (see \cite[Theorem 1.2]{hashizume}). In some sense, 
his result is better than the one in \cite{fujino}. 
We note that 
his proof uses \cite[Theorem 4.3]{gongyo-lehmann} 
(see Theorem \ref{thm3.2} below) and that the proof of 
\cite[Theorem 4.3]{gongyo-lehmann} uses 
\cite[Chapter V, 4.2.~Corollary]{nakayama} 
which follows from \cite[Chapter V, 4.1.~Theorem (1)]{nakayama}. 
Fortunately, the inequality \eqref{eq3.3} below, which is 
weaker than the one in \cite[Chapter V, 4.1.~Theorem (1)]{nakayama}, 
is sufficient for our purpose. So there are no troubles in \cite{hashizume}. 
\end{rem}

\begin{rem}\label{f-rem1.3}
We note that \cite[Lemma 2.4.9]{fujino-foundations} 
is nothing but \cite[Theorem 6.7 (7)]{lehmann}. 
Fortunately, however, we do not use it directly in \cite{fujino-foundations}. 
\end{rem}

It is highly desirable to solve 
the following conjecture. 

\begin{conj}\label{f-conj1.4}
Let $X$ be a smooth projective variety and let $D$ be a 
pseudo-effective $\mathbb R$-divisor 
on $X$. 
Then there exist a positive integer $m_0$, a 
positive rational number $C$, and an ample Cartier divisor 
$A$ on $X$ such that 
\begin{equation}\label{p-eq1.1}
Cm^{\kappa_\sigma(X, D)}\leq \dim H^0(X, \mathcal O_X
(\lfloor mm_0D\rfloor +A))
\end{equation}
holds for every large positive integer $m$. 
\end{conj}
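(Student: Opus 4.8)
The plan is to read the lower bound in \eqref{p-eq1.1} off the definition of $\kappa_\sigma$ and then to promote it from a statement about a subsequence to one valid for every large $m$. Recall that $\kappa_\sigma(X,D)=\max_{A}\sigma(D;A)$, where for an ample divisor $A$ the invariant $\sigma(D;A)$ is the largest $k$ with $\limsup_{m\to\infty}\dim H^0(X,\mathcal O_X(\lfloor mD\rfloor+A))/m^{k}>0$. Writing $\kappa:=\kappa_\sigma(X,D)$, I would first fix an ample Cartier divisor $A$ attaining this maximum; then there are a constant $c_0>0$ and an infinite set $\{m_i\}$ with $\dim H^0(X,\mathcal O_X(\lfloor m_iD\rfloor+A))\ge c_0\,m_i^{\kappa}$. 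The whole problem is to upgrade this \emph{limsup} to a bound for all $m$. Note that only the lower estimate is required, so neither the upper bound of Lemma~\ref{f-lem1.1} nor the identity $\kappa_\sigma=\kappa_\nu$ underlying its proof --- which fails by Lesieutre's example --- is available or needed.

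The positive part of the argument is the gap-filling between consecutive $m_i$. The integer $m_0$ is used to clear denominators: if $D$ is a $\mathbb Q$-divisor and $m_0$ is a common denominator, then $m_0D$ is integral and $\lfloor mm_0D\rfloor=m\,m_0D$, removing all rounding. In the model case where $L:=m_0D$ is effective, a fixed section of $L$ gives injections $H^0(mL+A)\hookrightarrow H^0((m+1)L+A)$, so $m\mapsto\dim H^0(mL+A)$ is nondecreasing; combined with the subsequence bound this yields $\dim H^0(mL+A)\ge c_0\,m_i^{\kappa}$ on each interval $m_i\le m\le m_{i+1}$, hence the desired $Cm^{\kappa}$ as soon as the optimal subsequence has bounded ratios $m_{i+1}/m_i\le\lambda$. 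For a general pseudo-effective $D$ only finitely many prime components $\Gamma_1,\dots,\Gamma_r$ carry negative coefficients, and the negative part of $\lfloor bD\rfloor-\lfloor aD\rfloor$ is bounded by a fixed effective divisor once $b-a$ is bounded; absorbing it into the ample divisor (replacing $A$ by $A+B$, which does not change $\sigma(D;A)$ for $A$ sufficiently ample) restores the comparison up to bounded additive steps. Passing to $P_\sigma(D)$ via Nakayama's $\sigma$-decomposition, which has the same $\kappa_\sigma$, should further simplify the component bookkeeping.

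The hard part --- and the reason this remains a conjecture --- is exactly the passage from $\limsup$ to a bound at every $m$, i.e.\ controlling the ratios $m_{i+1}/m_i$ of the optimal subsequence, equivalently establishing some superadditivity of $a_m:=\dim H^0(X,\mathcal O_X(\lfloor mD\rfloor+A))$. The natural attempt fails: multiplying $s\in H^0(\lfloor mD\rfloor+A)$ by $s'\in H^0(\lfloor m'D\rfloor+A)$ produces a section of $\mathcal O_X(\lfloor (m+m')D\rfloor+2A)$, not of $\mathcal O_X(\lfloor(m+m')D\rfloor+A)$, and the ratio $\dim H^0(\cdot+2A)/\dim H^0(\cdot+A)$ is not controlled uniformly in $m$. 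Because of this extra copy of $A$ there is no Fekete-type subadditivity to force $\liminf_m a_m/m^{\kappa}>0$, and the optimal growth might a priori be realised only along a very sparse sequence. One cannot circumvent this by intersection theory: a lower bound with the larger exponent $\kappa_\nu$ would suffice formally, but $a_m$ genuinely grows like $m^{\kappa_\sigma}$ and not like $m^{\kappa_\nu}$ in Lesieutre's example, so the intersection-theoretic regularity of the volume is useless here. A proof will therefore have to produce the missing regularity of $a_m$ directly, presumably from finite generation or abundance-type inputs in the cases where they are available, and the general case seems to require a genuinely new idea.
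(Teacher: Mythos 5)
You should first note that the statement you were asked to prove is Conjecture~\ref{f-conj1.4}: the paper does not prove it, and explicitly presents it as an open problem whose resolution would repair the damaged arguments elsewhere. So there is no proof in the paper to compare your attempt against, and your text, read honestly, is not a proof either --- you say so yourself in the final paragraph. What you have written is a correct diagnosis of where the difficulty lies, and that diagnosis coincides with the paper's own: the definition of $\kappa_\sigma(X,D)$ only guarantees $\limsup_{m\to\infty}\dim H^0(X,\mathcal O_X(\lfloor mD\rfloor+A))/m^{\kappa_\sigma(X,D)}>0$, i.e.\ a lower bound along some unbounded subsequence, whereas \eqref{p-eq1.1} demands the bound for \emph{every} large $m$; this is precisely the $\limsup$ issue the author isolates in Observation~\ref{f-say1.5}, and it is why Lemma~\ref{f-lem1.1} (whose published proof rests on the false identity $\kappa_\sigma=\kappa_\nu$) cannot be invoked.

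Your intermediate reductions are sound but do not close the gap, and you correctly identify the step that fails. Clearing denominators with $m_0$ and using an effective representative to get monotonicity of $a_m=\dim H^0(X,\mathcal O_X(\lfloor mm_0D\rfloor+A))$ would indeed finish the argument \emph{if} the optimal subsequence $\{m_i\}$ had bounded ratios $m_{i+1}/m_i$, but nothing in the definition of $\kappa_\sigma$ provides that, and the natural multiplication map lands in $H^0$ of the divisor plus $2A$ rather than $A$, so no Fekete-type subadditivity is available to control the gaps. (Two smaller caveats: for a genuinely pseudo-effective $D$ there need be no effective representative of any multiple, so even the monotonicity step requires the absorption-into-$A$ device you sketch, which must be checked not to change $\sigma(D;A)$; and passing to $P_\sigma(D)$ introduces an $\mathbb R$-divisor even when $D$ is rational, which is exactly the setting where Lesieutre's example shows the known results break down.) In short: your submission correctly explains why the statement is a conjecture rather than proving it, which is consistent with the paper, but it should not be graded as a proof.
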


If Conjecture \ref{f-conj1.4} is true, then 
there are no troubles in \cite[Theorems 1.3 and 1.9]{fujino} 
and \cite[Chapter V, 4.1.~Theorem (1)]{nakayama}. 

\medskip 

The following observation may help the reader understand this paper, 
the trouble in \cite[Chapter V, 4.1.~Theorem (1)]{nakayama}, 
and Conjecture \ref{f-conj1.4}. 

\begin{say}[Observation]\label{f-say1.5}
Let us consider 
$$
f, g: \mathbb Z_{\geq 0}\longrightarrow \mathbb Z_{\geq 0}
$$ such that 
\begin{equation}\label{p-eq1.2}
\limsup _{m\to \infty}
f(m)>0 \quad \quad \text{and} 
\quad \quad 
\limsup_{m\to \infty} g(m)>0. 
\end{equation}
We want to prove 
\begin{equation}\label{p-eq1.3}
\limsup_{m\to \infty} \left(f(m)g(m)\right)>0. 
\end{equation}
In general, \eqref{p-eq1.3} does not follow from 
\eqref{p-eq1.2}. 
It may happen that $f(m)g(m)=0$ holds for every 
$m$. 
If there exists a positive constant $C$ such that 
$f(m)\geq C$ for every large positive integer $m$, 
then we have 
$$
\limsup_{m\to \infty} \left(f(m)g(m)\right)>0.
$$
\end{say}

\begin{ack} The 
author was partially supported by 
JSPS KAKENHI Grant Numbers JP16H03925, JP16H06337.
He thanks Professors Noboru 
Nakayama, Thomas Eckl, Brian Lehmann, John Lesieutre, 
Yoshinori Gongyo, Kenta Hashizume, Haidong Liu, Sung Rak Choi, and 
Jinhyung Park. 
\end{ack}

We will freely use the notation in \cite{fujino} and 
work over $\mathbb C$, the complex number 
field, throughout this paper.  

\section{On \cite[Theorems 1.3 and 1.9]{fujino}}
The proof of the main theorem of \cite{fujino}, that is, 
\cite[Theorem 1.3]{fujino}, is incomplete. 
Here we will prove slightly weaker inequalities. 

\begin{thm}[{see \cite[Theorem 1.3]{fujino}}]
\label{q-thm2.1}
Let $f:X\to Y$ be a surjective morphism between smooth projective 
varieties with connected fibers. 
Let $D_X$ $($resp.~$D_Y$$)$ be a simple normal crossing 
divisor on $X$ $($resp.~$Y$$)$. 
Assume that $\Supp f^*D_Y\subset \Supp D_X$. 
Then we have 
$$
\kappa _{\sigma}(X, K_X+D_X)\geq \kappa _{\sigma} 
(F, K_F+D_X|_F)+\kappa(Y, K_Y+D_Y)
$$ 
and 
$$
\kappa _{\sigma}(X, K_X+D_X)\geq \kappa
(F, K_F+D_X|_F)+\kappa_{\sigma}(Y, K_Y+D_Y), 
$$
where $F$ is a sufficiently general fiber of $f:X\to Y$. 
\end{thm}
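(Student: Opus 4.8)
The plan is to reduce the two inequalities to a statement about the asymptotic growth of sections on $X$, and to exploit the structure of $\kappa_\sigma$ as encoding this growth rate. The crucial conceptual point, highlighted by the author's Observation \ref{f-say1.5}, is that the original argument broke because a $\limsup$ of a product need not be positive even when both factors have positive $\limsup$; so my goal is to arrange that \emph{one} of the two factors is bounded below by a positive constant for all large $m$, which then forces the product to grow at the required rate.

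First I would fix a sufficiently ample Cartier divisor $A_X$ on $X$ and $A_Y$ on $Y$, and study the growth of $\dim H^0(X, \mathcal O_X(\lfloor m(K_X+D_X)\rfloor + A_X))$. The natural tool is to push forward along $f$ and use a relative–base version of the Iitaka fibration machinery. For the first inequality, the term $\kappa(Y, K_Y+D_Y)$ is an honest Iitaka dimension, so along $Y$ I have genuine sections — not merely a positive $\limsup$ but a growth rate of order $m^{\kappa(Y,K_Y+D_Y)}$ coming from the base. I would choose a resolution/model on which $K_Y+D_Y$ admits an Iitaka fibration, pull back general fibers, and use the hypothesis $\Supp f^*D_Y \subset \Supp D_X$ to ensure that $K_X+D_X$ restricted to a general fiber $F$ dominates $K_F + D_X|_F$ (adjunction plus the inclusion of supports controls the boundary). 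Combining the fiberwise growth of order $m^{\kappa_\sigma(F,K_F+D_X|_F)}$ with the base growth of order $m^{\kappa(Y,K_Y+D_Y)}$ multiplicatively yields a lower bound of order $m^{\kappa_\sigma(F,\,\cdot\,)+\kappa(Y,\,\cdot\,)}$ for sections on $X$, and by the definition of $\kappa_\sigma$ (as the top exponent governing $\dim H^0(X,\mathcal O_X(\lfloor mD\rfloor+A))$) this gives exactly the first stated inequality.

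For the second inequality I would swap the roles: now $\kappa(F, K_F+D_X|_F)$ is the honest Iitaka dimension on the fiber, giving genuine sections fiberwise, while $\kappa_\sigma(Y, K_Y+D_Y)$ governs the base growth. The key is that the fiberwise Iitaka dimension provides, for a general fiber, a positive constant lower bound (indeed a polynomial of the correct degree) independent of the base point, so that when I push forward and count sections over $Y$, I am multiplying a base quantity of order $m^{\kappa_\sigma(Y,K_Y+D_Y)}$ by a fiber contribution that is bounded below — precisely the situation in Observation \ref{f-say1.5} where one factor stays above a positive constant. This is why the weaker inequalities survive even though the equality $\kappa_\sigma=\kappa_\nu$ fails: I never need a two-sided estimate on the $\sigma$-factor, only a lower bound, and the honest Iitaka dimension on the other factor supplies the uniform positivity that rescues the product.

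The main obstacle I anticipate is making the multiplicativity of growth rates rigorous when one factor is measured by $\kappa_\sigma$ (a $\limsup$-type invariant) rather than by a true Iitaka fibration. Concretely, I must control the sections on $X$ by a relative construction: for the first inequality I need that, for infinitely many (or all large) $m$ with $\dim H^0(Y,\mathcal O_Y(\lfloor m(K_Y+D_Y)\rfloor+A_Y)) \gtrsim m^{\kappa(Y,K_Y+D_Y)}$, the sections on the base lift and multiply against fiberwise sections without collapsing to zero. The inclusion $\Supp f^*D_Y\subset\Supp D_X$ is exactly what guarantees that $f^*(K_Y+D_Y)$-type sections inject into sections of $K_X+D_X$ after twisting, so pulling back base sections and tensoring with the fiberwise system is well defined; verifying that this tensor product genuinely produces linearly independent sections of the expected multiplicity — i.e.\ that no unexpected cancellation occurs and the degree of the polynomial lower bound is additive — is the technical heart of the argument, and I expect it to require a careful choice of the ample divisors $A_X, A_Y$ and a general-position argument for $F$.
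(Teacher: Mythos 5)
Your proposal is correct and takes essentially the same approach as the paper: both arguments start from the product-type lower bound $h^0(X,\cdot)\geq r(mD;A)\cdot h^0(Y,\cdot)$ established in the proof of \cite[Theorem 1.3]{fujino}, and for each of the two inequalities use the factor measured by the honest Iitaka dimension $\kappa$ to supply a uniform polynomial lower bound valid for every large $m$, so that the $\limsup$ carried by the $\kappa_\sigma$ factor survives the product, exactly as in Observation \ref{f-say1.5}. The only difference is cosmetic: the paper simply imports that product inequality from the proof of \cite[Theorem 1.3]{fujino} rather than re-deriving it as you sketch, and accesses $\kappa_\sigma(F,K_F+D_X|_F)$ concretely through the rank $r(mD;A)$ of the pushforward sheaf.
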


The inequalities in 
Theorem \ref{q-thm2.1} follow from the proof of 
\cite[Theorem 1.3]{fujino} without any difficulties. 

\medskip 

Before we prove Theorem \ref{q-thm2.1}, 
we give a small remark on $\kappa_\sigma$. 

\begin{rem}\label{q-rem2.2}
Let $D$ be a pseudo-effective $\mathbb R$-divisor 
on a smooth projective variety $X$. 
Then $\kappa _\sigma(X, D)=\kappa _\sigma(X, lD)$ holds 
for every positive integer $l$. 
We note that 
$\kappa_\sigma(X, lD)\geq \kappa_\sigma(X, D)$ holds 
by \cite[Chapter V, 2.7.~Proposition (1)]{nakayama} since 
$lD-D=(l-1)D$ is pseudo-effective. 
By definition, $\kappa_\sigma(X, lD)\leq \kappa_\sigma(X, D)$ always 
holds. 
\end{rem}

Let us prove Theorem \ref{q-thm2.1}. 

\begin{proof}[Proof of Theorem \ref{q-thm2.1}]
In this proof, we will freely use the notation in the proof of 
\cite[Theorem 1.3]{fujino}. 
In the proof of \cite[Theorem 1.3]{fujino}, 
we have 
\begin{equation}\label{eq1.1}
\begin{split}
&\dim H^0(X, \mathcal O_X(mk(K_X+D_X)+A+2f^*H))\\
&\geq r(mD; A)\cdot \dim H^0(Y, \mathcal O_Y(mk(K_Y+D_Y)+H))
\end{split}
\end{equation} 
for every positive integer $m$, where 
\begin{equation}\label{eq1.2}
D=k(K_{X/Y}+D_X-f^*D_Y) 
\end{equation} 
and 
\begin{equation}\label{eq1.3}
r(mD; A)=\rank f_*\mathcal O_X(mD+A). 
\end{equation}
We can take a positive integer $m_0$ and a 
positive real number $C_0$ such that 
\begin{equation}\label{eq1.4}
C_0m^{\kappa (F, D|_F)}\leq r(mm_0D; A)
\end{equation}
for every large positive integer $m$. 
Since $\kappa (F, D|_F)=\kappa (F, K_F+D_X|_F)$, we have 
\begin{equation}\label{eq1.5}
\begin{split}
&\dim H^0(X, \mathcal O_X(mm_0k(K_X+D_X)+A+2f^*H))\\
&\geq C_0m^{\kappa(F, K_F+D_X|_F)}
\cdot \dim H^0(Y, \mathcal O_Y(mm_0k(K_Y+D_Y)+H)) 
\end{split}
\end{equation} 
for every positive integer $m$ by \eqref{eq1.1} and 
\eqref{eq1.4}. 
We may assume that $H$ is sufficiently ample. 
Then we get 
\begin{equation}\label{eq1.6}
\limsup_{m\to \infty} \frac{\dim H^0(X, 
\mathcal O_X(mm_0k(K_X+D_X)+A+2f^*H))}
{m^{\kappa (F, K_F+D_X|_F)+\kappa _{\sigma}(Y, K_Y+D_Y)}}>0
\end{equation}
by \eqref{eq1.5} and the definition of $\kappa _\sigma(Y, K_Y+D_Y)$. 
This means that the following inequality 
\begin{equation}\label{eq1.7}
\kappa _\sigma(X, K_X+D_X)\geq \kappa (F, K_F+D_X|_F)+
\kappa _\sigma(Y, K_Y+D_Y)
\end{equation} 
holds. 

Similarly, we can take a positive integer $m_1$ and 
a positive real number $C_1$ such that 
\begin{equation}\label{eq1.8}
\begin{split}
C_1m^{\kappa (Y, K_Y+D_Y)}&\leq 
\dim H^0(Y, \mathcal O_Y(mm_1k(K_Y+D_Y)))\\ 
&\leq \dim H^0(Y, \mathcal O_Y(mm_1k(K_Y+D_Y)+H))
\end{split}
\end{equation}
for every large positive integer $m$ by the definition 
of $\kappa (Y, K_Y+D_Y)$ if 
$H$ is a sufficiently ample Cartier divisor. 
Then, by \eqref{eq1.1} and \eqref{eq1.8}, we have 
\begin{equation}\label{eq1.9}
\begin{split}
&\dim H^0(X, \mathcal O_X(mm_1k(K_X+D_X)+A+2f^*H))\\
&\geq C_1m^{\kappa(Y, K_Y+D_Y)}
\cdot r(mm_1D; A)
\end{split}
\end{equation}
for every large positive integer $m$. 
Therefore, we get 
\begin{equation}\label{eq1.10}
\limsup_{m\to \infty} \frac{\dim H^0(X, 
\mathcal O_X(mm_1k(K_X+D_X)+A+2f^*H))}
{m^{\kappa _\sigma(F, K_F+D_X|_F)+\kappa(Y, K_Y+D_Y)}}>0 
\end{equation}
when $A$ is sufficiently ample. 
Note that 
\begin{equation}\label{eq1.11}
\sigma (m_1D|_F; A|_F)=\max \left\{
k\in \mathbb Z_{\geq 0} \cup \{-\infty\}\, 
\left|\, \underset{m\to \infty}{\limsup}\frac{r(mm_1D; A)}{m^k}>0\right.\right\} 
\end{equation}
for a sufficiently general fiber $F$ of $f:X\to Y$ and 
that 
\begin{equation}\label{eq1.12}
\begin{split}
\kappa _\sigma (F, K_F+D_X|_F)
&=\kappa_\sigma(F, D|_F)
\\&=\kappa_\sigma (F, m_1D|_F)
\\&=\max\{\sigma(m_1D|_F; A|_F)\, |\, {\text{$A$ is very ample}}\}. 
\end{split}
\end{equation}
Hence we have the inequality 
\begin{equation}\label{eq1.13}
\kappa _\sigma(X, K_X+D_X)\geq \kappa_\sigma (F, K_F+D_X|_F)+
\kappa (Y, K_Y+D_Y) 
\end{equation} 
by \eqref{eq1.10}. 
\end{proof}

Of course, we have to weaken inequalities 
in \cite[Theorem 4.12.1 and Corollary 4.12.2]{fujino-foundations} following 
Theorem \ref{q-thm2.1}. 

\medskip 

Theorem 1.9 in \cite{fujino} has the same trouble as \cite[Theorem 1.3]{fujino}. 
Of course, we can prove slightly weaker inequalities. 

\begin{thm}[{see \cite[Theorem 1.9]{fujino}}]\label{q-thm2.3}
Let $f:X\to Y$ be a proper surjective morphism 
from a normal variety $X$ onto a smooth 
complete variety $Y$ with connected fibers. 
Let $D_X$ be an effective $\mathbb Q$-divisor 
on $X$ such that $(X, D_X)$ is lc 
and let $D_Y$ be a simple normal crossing divisor on $Y$. 
Assume that $\Supp f^*D_Y\subset \lfloor D_X\rfloor$, 
where $\lfloor D_X\rfloor$ is the round-down of $D_X$. 
Then we have 
$$
\kappa _\sigma (X, K_X+D_X)\geq 
\kappa _\sigma(F, K_F+D_X|_F)+\kappa
(Y, K_Y+D_Y) 
$$ 
and 
$$
\kappa _\sigma (X, K_X+D_X)\geq 
\kappa(F, K_F+D_X|_F)+\kappa _\sigma 
(Y, K_Y+D_Y),  
$$ 
where $F$ is a sufficiently general fiber of $f:X\to Y$. 
\end{thm}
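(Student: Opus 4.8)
The plan is to reduce Theorem~\ref{q-thm2.3} to the simple normal crossing case already handled in Theorem~\ref{q-thm2.1} by passing to a log resolution, so that the only input specific to the log canonical setting is the birational invariance of the Kodaira-type invariants involved. First I would choose a log resolution $g\colon X'\to X$ of $(X,D_X)$, after further blow-ups if necessary, so that $X'$ is smooth projective, the sum $g^{-1}_*D_X+\operatorname{Exc}(g)$ has simple normal crossing support, and $f'=f\circ g\colon X'\to Y$ still has connected fibers. On $X'$ I would set $D_{X'}=g^{-1}_*D_X+\sum_i E_i$, where the $E_i$ are the $g$-exceptional prime divisors taken with coefficient one. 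Since $(X,D_X)$ is log canonical, the discrepancies $a_i=a(E_i;X,D_X)$ satisfy $a_i\ge -1$, whence
$$
K_{X'}+D_{X'}=g^*(K_X+D_X)+F,\qquad F=\sum_i(1+a_i)E_i\ge 0,
$$
with $F$ effective and $g$-exceptional; thus $(X',D_{X'})$ is a simple normal crossing pair on which Theorem~\ref{q-thm2.1} may be applied, in its version allowing $\mathbb Q$-boundaries.

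Next I would check that passing from $X$ to $X'$ changes none of the quantities in the two asserted inequalities. Because $g$ is birational and $F$ is effective and $g$-exceptional, Nakayama's theory of the $\sigma$-decomposition (\cite[Chapter V]{nakayama}) gives
$$
\kappa_\sigma(X',K_{X'}+D_{X'})=\kappa_\sigma\bigl(X',g^*(K_X+D_X)\bigr)=\kappa_\sigma(X,K_X+D_X),
$$
the first equality because adding an effective $g$-exceptional divisor to the pull-back of a pseudo-effective divisor leaves its positive part unchanged, and the second by birational invariance of $\kappa_\sigma$. The target quantities $\kappa(Y,K_Y+D_Y)$ and $\kappa_\sigma(Y,K_Y+D_Y)$ are untouched, since neither $Y$ nor $D_Y$ is modified.

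For the fiber terms I would argue on a sufficiently general fiber. Writing $F'$ for the general fiber of $f'$ over a general point $y\in Y$, one has $F'=g^{-1}(F)$ with $F=f^{-1}(y)$, and for general $y$ the induced map $g|_{F'}\colon F'\to F$ is a log resolution of $(F,D_X|_F)$ satisfying
$$
K_{F'}+D_{X'}|_{F'}=(g|_{F'})^*(K_F+D_X|_F)+F|_{F'},\qquad F|_{F'}\ge 0,
$$
where $F|_{F'}$ is $(g|_{F'})$-exceptional for general $y$ by a dimension count on $\Supp F$. Hence both $\kappa_\sigma$ and the Iitaka dimension $\kappa$ of $K_F+D_X|_F$ coincide with those of $K_{F'}+D_{X'}|_{F'}$. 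Finally the hypothesis of Theorem~\ref{q-thm2.1} holds: the components of $f^*D_Y$ lie in $\lfloor D_X\rfloor$, so their strict transforms lie in $g^{-1}_*D_X\subset D_{X'}$, while any $g$-exceptional component of $(f')^*D_Y=g^*f^*D_Y$ is one of the $E_i\subset D_{X'}$; thus $\Supp (f')^*D_Y\subset \Supp D_{X'}$. Applying Theorem~\ref{q-thm2.1} to $f'\colon X'\to Y$ with boundaries $D_{X'}$ and $D_Y$, and transporting the conclusion back through the invariances above, yields both displayed inequalities.

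The step I expect to be the main obstacle is the fiberwise bookkeeping: arranging the single resolution $g$ so that its restriction to a general fiber is simultaneously a log resolution of $(F,D_X|_F)$, keeps the exceptional divisor exceptional after restriction, and preserves both $\kappa$ and $\kappa_\sigma$ of $K_F+D_X|_F$. This is precisely the point at which the weaker fiber estimate inherited from the proof of Theorem~\ref{q-thm2.1}—using $\kappa(F,K_F+D_X|_F)$ in one inequality and $\kappa_\sigma(F,K_F+D_X|_F)$ in the other—must be invoked in place of the invalid equality $\kappa_\sigma=\kappa_\nu$ underlying Lemma~\ref{f-lem1.1}.
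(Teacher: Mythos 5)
Your overall strategy---reduce to Theorem \ref{q-thm2.1} by a log resolution $g\colon X'\to X$ and then transport $\kappa$ and $\kappa_\sigma$ back and forth using the birational invariance of these invariants under pullback plus addition of effective exceptional divisors---is not the route the paper takes, and it has a genuine gap at its central step. Theorem \ref{q-thm2.1} is stated for a \emph{simple normal crossing divisor} $D_{X'}$, i.e.\ a reduced divisor, whereas Theorem \ref{q-thm2.3} allows $D_X$ to be an effective $\mathbb Q$-divisor with fractional coefficients (only $\Supp f^*D_Y\subset\lfloor D_X\rfloor$ is required). Your $D_{X'}=g^{-1}_*D_X+\sum_iE_i$ inherits the fractional coefficients of $D_X$ on the strict transform, so it is not reduced, and the phrase ``in its version allowing $\mathbb Q$-boundaries'' invokes a statement that is proved neither in this paper nor in \cite[Theorem 1.3]{fujino}; that $\mathbb Q$-boundary snc statement is essentially equivalent to Theorem \ref{q-thm2.3} itself after resolution, so the reduction is circular. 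You cannot repair this by rounding up: replacing $D_{X'}$ by its support adds an effective \emph{non-exceptional} divisor to $K_{X'}+D_{X'}$, which can only increase $\kappa_\sigma(X',K_{X'}+D_{X'})$, so the resulting lower bound does not descend to $\kappa_\sigma(X,K_X+D_X)$. Your argument does prove the theorem in the special case where $D_X$ is reduced. (A secondary point: $X$ is only assumed normal and proper over the complete variety $Y$, so projectivity of $X'$ and $Y$, needed for Theorem \ref{q-thm2.1}, also requires an argument.)

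What the paper actually does is not a reduction to the snc case: it reruns the original proof of \cite[Theorem 1.9]{fujino}, which establishes the key inequality \eqref{eq1.1} directly for lc pairs via the appropriate semipositivity of $f_*\mathcal O_X(mD+A)$ with $D=k(K_{X/Y}+D_X-f^*D_Y)$, and then applies exactly the modification carried out in the proof of Theorem \ref{q-thm2.1}: bound \emph{one} of the two factors from below by $C m^{\kappa}$ using only the definition of the Iitaka dimension (as in \eqref{eq1.4} and \eqref{eq1.8}), and read off the other exponent from the $\limsup$ characterization of $\kappa_\sigma$ (as in \eqref{eq1.11}--\eqref{eq1.12}), instead of invoking the invalid two-sided bound of Lemma \ref{f-lem1.1}. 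You have correctly identified where the corrigendum's weakening (one $\kappa$, one $\kappa_\sigma$) must enter, and your fiberwise bookkeeping for the exceptional locus is fine, but the lc-to-snc reduction you propose does not close the argument as stated.
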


It is obvious how to modify the proof of 
\cite[Theorem 1.9]{fujino} in order to get Theorem \ref{q-thm2.3}. 
For the details, see the proof of Theorem \ref{q-thm2.1} above. 

\medskip 

As we said above, the inequalities in Theorem \ref{q-thm2.1} 
are sufficient for \cite[Corollaries 1.5 and 1.6]{fujino}. 
The reader can check it without any difficulties.

\section{On \cite[Remark 3.8]{fujino}, 
\cite[Chapter V, 4.1.~Theorem (1)]{nakayama}, and so on}

In \cite[Remark 3.8]{fujino}, the author pointed out a gap in 
the proof of \cite[Chapter V, 4.1.~Theorem (1)]{nakayama} 
and filled it by using \cite[Theorem 6.7 (7)]{lehmann} (see also 
\cite[Remark 4.11.6]{fujino-foundations}). 
Therefore, the proof of the inequalities in 
\cite[Chapter V, 4.1.~Theorem (1)]{nakayama} is still incomplete. 

\begin{say}[Nakayama's inequality for $\kappa_\sigma$]\label{say3.1} 
Here, we will freely use the notation in \cite[Chapter V, 4.1.~Theorem]
{nakayama}. 
In \cite[Chapter V, 4.1.~Theorem (1)]{nakayama}, Nakayama claims 
that the inequality 
\begin{equation}\label{eq3.1}
\kappa_\sigma(D+f^*Q)\geq \kappa _\sigma(D; X/Y)
+\kappa_\sigma(Q)
\end{equation}
holds. Unfortunately, this inequality \eqref{eq3.1} 
does not follow directly from the inequality 
\begin{equation}\label{eq3.2}
h^0(X, \lceil m(D+f^*Q)\rceil +A+2f^*H)
\geq r(mD; A) \cdot h^0(Y, \lfloor mQ\rfloor +H)
\end{equation}
established in the proof of 
\cite[Chapter V, 4.1.~Theorem (1)]{nakayama}. 
By the same argument as in the proof of 
Theorem \ref{q-thm2.1} above, 
by using \cite[Chapter II, 3.7.~Theorem]{nakayama}, 
we can prove 
\begin{equation}\label{eq3.3}
\kappa_\sigma(D+f^*Q)\geq \kappa _\sigma(D; X/Y)
+\kappa(Q)
\end{equation}
and 
\begin{equation}\label{eq3.4}
\kappa_\sigma(D+f^*Q)\geq \kappa (D; X/Y)
+\kappa_\sigma(Q). 
\end{equation}  
If we put $D=K_{X/Y}+\Delta$ and $Q=K_Y$, then 
we have 
\begin{equation}\label{eq3.5}
\kappa_\sigma(K_X+\Delta)\geq \kappa _\sigma(K_{X_y}+\Delta|_{X_y})
+\kappa(K_Y)
\end{equation}
and 
\begin{equation}\label{eq3.6}
\kappa_\sigma(K_X+\Delta)\geq \kappa (K_{X_y}+\Delta|_{X_y})
+\kappa_\sigma(K_Y). 
\end{equation}  
Lesieutre's example does not affect \cite[Chapter V, 4.1.~Theorem (2)]
{nakayama} because we do not use $\kappa_\sigma$ for the proof of 
\cite[Chapter V, 4.1.~Theorem (2)]{nakayama}. 
\end{say}

We note that the inequalities \eqref{eq3.3} and 
\eqref{eq3.4} are sufficient for the proof of 
\cite[Theorem 4.12.8]{fujino-foundations}. 

\medskip 

The inequality \eqref{eq3.1} has already played an important 
role in the theory of minimal models. 
The following result is very well known and has already been 
used in various papers. 

\begin{thm}[{\cite[Remark 2.6]{dhp} and 
\cite[Theorem 4.3]{gongyo-lehmann}}]\label{thm3.2}
Let $(X, \Delta)$ be a projective klt pair such that $\Delta$ is 
a $\mathbb Q$-divisor. 
Then $(X, \Delta)$ has a good minimal model 
if and only if $\kappa_\sigma(X, K_X+\Delta)=\kappa(X, K_X+\Delta)$. 
\end{thm}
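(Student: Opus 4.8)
The plan is to prove the two implications separately, since they are of completely different depth: the ``only if'' direction is essentially formal, while the ``if'' direction is the substantial one. Throughout, the numerical dimension in play is Nakayama's $\kappa_\sigma$, and since only $\kappa_\sigma$ (never $\kappa_\nu$) enters the statement, Lesieutre's example has no bearing here. I would first record the general facts that $\kappa(X,K_X+\Delta)\le \kappa_\sigma(X,K_X+\Delta)$ by \cite[Chapter V, 2.7.~Proposition (1)]{nakayama}, and that $\kappa_\sigma\ge 0$ holds precisely when $K_X+\Delta$ is pseudo-effective. If $K_X+\Delta$ is not pseudo-effective, then $\kappa_\sigma=\kappa=-\infty$ and $(X,\Delta)$ admits a Mori fibre space but no minimal model at all, so in verifying the equivalence I may assume $K_X+\Delta$ is pseudo-effective; when $\kappa=\kappa_\sigma$ holds this automatically forces the common value $\kappa_0:=\kappa(X,K_X+\Delta)\ge 0$.

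For the forward direction, suppose $(X,\Delta)$ has a good minimal model $(X',\Delta')$, so that $K_{X'}+\Delta'$ is semi-ample. A semi-ample divisor is pulled back from an ample class on the image of its Iitaka morphism, and for such a divisor the Iitaka and numerical dimensions coincide, giving $\kappa(X',K_{X'}+\Delta')=\kappa_\sigma(X',K_{X'}+\Delta')$. It then remains to observe that both $\kappa$ and $\kappa_\sigma$ are preserved under the divisorial contractions and flips of a $(K_X+\Delta)$-MMP: the log canonical ring is unchanged, and the invariance of $\kappa_\sigma$ under these steps is part of Nakayama's theory \cite{nakayama}. Comparing the two ends of the MMP connecting $X$ and $X'$ yields $\kappa_\sigma(X,K_X+\Delta)=\kappa(X,K_X+\Delta)$.

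For the backward direction I would argue in three steps. After a small $\mathbb{Q}$-factorialization and a log resolution I may assume $X$ is $\mathbb{Q}$-factorial and that $K_X+\Delta$ admits an Iitaka fibration $f\colon X\to Z$ with $\dim Z=\kappa_0$. First, I would show that the general fibre $F$ satisfies $\kappa_\sigma(F,(K_X+\Delta)|_F)=0$: by construction $\kappa(F,(K_X+\Delta)|_F)=0$, while the numerical subadditivity over $f$ in the spirit of Theorem \ref{q-thm2.1} (formulated through the canonical bundle formula, so that the base contribution equals $\dim Z=\kappa_0$) gives $\kappa_\sigma(F,(K_X+\Delta)|_F)+\kappa_0\le \kappa_\sigma(X,K_X+\Delta)=\kappa_0$, hence $\kappa_\sigma(F,(K_X+\Delta)|_F)=0$. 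Second is the base case: a klt pair with $\kappa_\sigma=\kappa=0$ has a good minimal model on which the log canonical divisor is numerically trivial, by abundance in numerical dimension zero (Nakayama, Gongyo). Third, I would propagate abundance from the fibres to $X$: the moduli part produced by the canonical bundle formula is big on $Z$, so \cite{dhp} (via BCHM) supplies a good minimal model on the base, and Lai's theorem on varieties fibred by good minimal models then assembles a good minimal model of $(X,\Delta)$; this is the route underlying \cite[Theorem 4.3]{gongyo-lehmann}.

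The hard part will be the third step, namely lifting abundance from the general fibres of the Iitaka fibration to the total space. Having good minimal models on fibres does not by itself produce one on $X$; one must run a relative MMP over $Z$, transfer positivity to the base through the canonical bundle formula, and invoke the existence and termination inputs behind Lai's theorem, which themselves rest on BCHM and Birkar's work. A secondary but essential point is to verify in the first step that it is genuinely $\kappa_\sigma$—and not $\kappa_\nu$—that restricts to $0$ on $F$; this is precisely the juncture at which the corrected subadditivity statements of this paper, rather than the erroneous identification $\kappa_\sigma=\kappa_\nu$, must be used.
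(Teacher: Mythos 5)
Your proposal reconstructs a full proof of a result that the paper itself does not reprove: in the source, Theorem \ref{thm3.2} is a cited theorem (from \cite[Remark 2.6]{dhp} and \cite[Theorem 4.3]{gongyo-lehmann}), and the paper's entire contribution to it is the verification that the one endangered ingredient --- Nakayama's inequality \eqref{eq3.1}, whose proof is compromised by Lesieutre's example --- can be replaced by the weaker \eqref{eq3.3} because in this application the divisor $Q$ on the base is big, so $\kappa(Q)=\kappa_\sigma(Q)=\dim Z$. Your sketch follows the same underlying route as those references (forward direction by invariance of $\kappa$ and $\kappa_\sigma$ under MMP steps plus $\kappa=\kappa_\sigma$ for semi-ample divisors; backward direction via the Iitaka fibration, $\kappa_\sigma=0$ on the general fibre, abundance in numerical dimension zero, and Lai's theorem with the canonical bundle formula and BCHM on the base), and you correctly isolate the first step of the backward direction as the juncture where the corrected subadditivity must be invoked --- this is exactly the point the corrigendum is making. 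Two small precisions: the tool you need there is not really Theorem \ref{q-thm2.1} (whose base contribution is $\kappa(Y,K_Y+D_Y)$, not $\dim Z$) but the inequality \eqref{eq3.3}, i.e.\ the corrected form of \cite[Chapter V, 4.1.~Theorem (1)]{nakayama} applied with $Q$ big (equivalently \cite[Chapter V, 4.2.~Corollary]{nakayama}), which yields $\kappa_0=\kappa_\sigma(X,K_X+\Delta)\geq \kappa_\sigma(F,(K_X+\Delta)|_F)+\dim Z$ and hence $\kappa_\sigma(F,(K_X+\Delta)|_F)=0$; and you are right that the statement as printed tacitly requires $K_X+\Delta$ to be pseudo-effective (otherwise $\kappa_\sigma=\kappa=-\infty$ while no minimal model exists), a hypothesis present in the cited sources but omitted here. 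With those understood, your outline is consistent with the proofs the paper relies on.
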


In the proof of Theorem \ref{thm3.2}, 
the inequality \eqref{eq3.1} 
plays an important role in \cite[Remark 2.6]{dhp}.   
Fortunately, in \cite[Remark 2.6]{dhp}, 
the inequality \eqref{eq3.3} is sufficient because 
we need the inequality \eqref{eq3.1} in the 
case where $Q$ is a big divisor. 
In \cite[Theorem 4.3]{gongyo-lehmann}, 
Gongyo and Lehmann need \cite[Chapter V, 4.2.~Corollary]
{nakayama} in the proof of \cite[Theorem 4.3]{gongyo-lehmann}. 
We note that Nakayama uses the inequality \eqref{eq3.1} 
in the proof of \cite[Chapter V, 4.2.~Corollary]{nakayama}. 
Fortunately, we can easily see that 
\cite[Chapter V, 4.2.~Corollary]{nakayama} 
holds true because the inequality \eqref{eq3.3} is 
sufficient for that proof. We strongly recommend the reader to 
see \cite[Subsection 2.2]{hashizume-hu} for 
some related topics. 
We can find a generalization of Theorem \ref{thm3.2} 
(see \cite[Lemma 2.13]{hashizume-hu}). 

\begin{rem}\label{rem3.3}
In a recent preprint \cite{fujino-omega}, 
we introduce the notion of mixed-$\omega$-sheaves and 
discuss some topics related to Nakayama's inequality 
for $\kappa_\sigma$. 
For the details, see \cite[Section 11]{fujino-omega}. 
\end{rem}


\end{document}